\theoremstyle{plain}
  \newtheorem{theorem}{Theorem}
  \newtheorem{proposition}[theorem]{Proposition}
  \newtheorem{lemma}[theorem]{Lemma}
  \newtheorem{corollary}[theorem]{Corollary}
\theoremstyle{definition}
  \newtheorem{definition}[theorem]{Definition}
\theoremstyle{remark}
  \newtheorem{remark}[theorem]{Remark}
\newcommand{\displaycomma}{\textrm{,}}
\newcommand{\comma}{\textrm{,}}
\newcommand{\period}{\textrm{.}}
\newcommand{\bra}[1]{\left( #1 \right)}
\newcommand{\pa}[1]{\left( #1 \right)}
\newcommand{\sqa}[1]{\left[ #1 \right]}
\newcommand{\cur}[1]{\left\{ #1 \right\}}
\newcommand{\ang}[1]{\left< #1 \right>}
\newcommand{\abs}[1]{\left| #1 \right|}
\newcommand{\norm}[1]{\left\| #1 \right\|}
\newcommand{\qua}[1]{\left[ #1 \right]}
\newcommand{\rnum}{\mathbb{R}}
\newcommand{\prob}{\mathbb{P}}
\newcommand{\E}{\mathbb{E}}
\newcommand{\Prob}{\mathbb{P}}
\newcommand{\cD}{\mathbb{D}}
\begin{document}

\title{ BV-regularity for the Malliavin Derivative of the Maximum of the Wiener Process }
\author{Dario Trevisan}






\maketitle

\begin{abstract}
We show that, on the classical Wiener space, the random variable $\sup_{0\le t \le T} W_t$ admits a measure as second Malliavin derivative, whose total variation measure is  finite and singular w.r.t.\ the Wiener measure.
\end{abstract}

\section{Introduction}

Functions of bounded variation ($BV$) on abstract Wiener spaces were first introduced by M.~Fukushima and M.~Hino in \cite{Masatoshi2000} and  \cite{Fukushima2001} and subsequently studied with tools from geometric measure theory by L.~Ambrosio and his co-workers (see e.g.\ \cite{Ambrosio2010}, \cite{Ambrosio2010a} and \cite{Ambrosio2011}). In \cite{Pratelli2012}, M.\ Pratelli and the author investigate some properties of $BV$ functions in the classical Wiener space.

The aim of this article is to study in the $BV$ framework the Malliavin regularity of the maximum of a Wiener process, $M=\sup_{0\le t \le T} W_t$, which is well-known to be differentiable only once. This was initially motivated by possible applications, e.g.\ for the computation of Greeks for barrier options using Malliavin calculus, as in \cite{Gobet2003}, where it also is explicitly remarked that this lack of differentiability forces the introduction of many technicalities. As remarked by an anonymous referee, the results obtained here may also be connected to integration by parts formulas in convex sets in Wiener spaces, which were first studied by L.~Zambotti  in \cite{zambotti02} (see also \cite{hariya06}, \cite{funaki07} and \cite{otobe09} for further developments): this certainly requires deeper investigations. 

In this article, therefore, we focus on the second distributional derivative $D^2M$ and the main result, Theorem \ref{max1}, shows that it is a measure, with finite total variation (i.e.\ the Malliavin derivative $\nabla M$ is $BV$). Moreover, Theorem \ref{max2} shows that the total variation measure $\abs{D^2M}$, which is a finite Borel measure, is concentrated on the trajectories which attain the global maximum at least twice.

In the next section, we collect some easy or well-known facts about $BV$ functions on the classical Wiener space; then we proceed with the main results.

\section{Preliminary results}

In all what follows, let $T>0$ be fixed and let $\pa{\Omega = C_0 \bra{0,T},  \Prob, H^1_0 \sim L^2(0,T)}$, be the classical Wiener space, where we denote $L^2(0,T) = L^2\bra{\sqa{0,T}, \mathscr{L}}$ and $\mathscr{L}$ is the Lebesgue measure on the interval.

We completely refer to \cite{Pratelli2012}, Section 2, for a detailed introduction to $BV$ functions in the classical Wiener space setting: in this section we mainly recall and extend to the Hilbert space-valued case the definition together with a fundamental result. We also remark that, below, the square integrability condition is not optimal but provides an easier formulation. 

Let $(K,\abs{\cdot})$ be some separable Hilbert space and let $K\otimes L^2(0,T)$ denote the Hilbertian tensor product, which is equivalent to a suitable space of Hilbert-Schmidt operators. To keep notation simple, we write $\ang{\cdot,\cdot}$ for any scalar product.

In all what follows, given $h'\in L^2(0,T)$, we write $h \bra{t} = \int_0^t h'\bra{s}ds$. When $g$ is a cylindrical smooth function defined on $\Omega$, $\partial_h g$ denotes the directional derivative of $g$ along the direction $h \in C_0\bra{0,T}$, while \[ \partial^*_h g = \partial_h g - g \int_0^T h'\bra{s} dW_s \comma\] so that $-\partial^*_h$ is adjoint to $\partial_h$, in $L^2\bra{\Omega,\prob}$, i.e.\ when $X$ is also smooth, it holds $\Prob\sqa{ g \partial_h X} = - \Prob\sqa{X \partial_h^* g}$. $BV$ functions are precisely those r.v.'s such that the identity just written still holds, for any $h \in H^1_0$, when the l.h.s.\ is replaced with a suitable measure $DX$, with finite total variation measure, $\abs{DX}$.

\begin{definition}
A $K$-valued $X\in L^2\bra{\Omega,\Prob;K}$ is said to be of \emph{bounded variation} ($BV$) if there exists a $K\otimes L^2(0,T)$-valued measure $DX$, with finite total variation, such that, for every $k\in K$, $h' \in L^2(0,T)$  and every cylindrical smooth function $g$, it holds
\[ \int_\Omega g \, d\ang{DX,k\otimes h'} = -\E\qua{\ang{X,k}\partial_h^* g} \period\]
\end{definition}

\begin{remark}
As in \cite{Pratelli2012}, we use the following notation, which slightly differs from that commonly used in the framework of Malliavin calculus, e.g.\ in \cite{Nualart1995}: we write $\nabla X = \bra{\partial_t X}_{0\le t\le T}$ for the usual Malliavin derivative, while $DX$ stands for the measure-derivative.

In particular, the Malliavin space $\cD^{1,2}\bra{\Prob}$ consists of all the real valued random variables $X \in L^2\bra{\Prob}$ such that $DX = \nabla X. \Prob$, with $\abs{\nabla X} \in L^2\bra{\Prob}$.
\end{remark}

A key feature of $BV$ functions is summarized in the following closure result, whose proof proceeds along the same lines as in the last part of Theorem 4.1 in \cite{Ambrosio2010}, where the real-valued case is settled.

\begin{proposition}\label{prop-crit}
Given $X \in L^2\bra{\Omega,\Prob;K}$, if there exists some sequence $(X_n)_{n\ge1} \subseteq  BV \cap L^2\bra{\Prob;K}$ which converges to $X$ in $L^2\bra{\Prob;K}$, with
\[ \sup_{n\ge1} \abs{DX_n}\bra{\Omega} < \infty\comma\]
then $X$ is $BV$.
\end{proposition}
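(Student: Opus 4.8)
The plan is to use the uniform total-variation bound to extract, by weak-* compactness, a limiting $K\otimes L^2(0,T)$-valued measure, and then to pass to the limit in the integration-by-parts identity that defines $BV$.

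First I would fix a countable dense set $\cur{(k_i,h'_i)}_{i\ge1}$ in $K\times L^2(0,T)$. For each $i$ the scalar signed measures $\ang{DX_n,k_i\otimes h'_i}$ satisfy $\abs{\ang{DX_n,k_i\otimes h'_i}}\bra{\Omega}\le \abs{k_i\otimes h'_i}\,\abs{DX_n}\bra{\Omega}$, which is uniformly bounded in $n$ by hypothesis. Since $\Omega=C_0\bra{0,T}$ is Polish, bounded sets of finite signed Borel measures are sequentially weak-* compact when tested against $C_b(\Omega)$; a diagonal extraction then yields a single subsequence, not relabelled, along which $\ang{DX_n,k_i\otimes h'_i}$ converges weakly-* to a finite signed measure $\nu_i$ for every $i$ simultaneously.

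Next I would reassemble the scalar limits into one vector measure $\mu$. Using bilinearity in $(k,h')$, the uniform bound, and the lower semicontinuity of total variation under weak-* convergence, one verifies that the induced set function is countably additive and $K\otimes L^2(0,T)$-valued, with $\abs{\mu}\bra{\Omega}\le \liminf_n \abs{DX_n}\bra{\Omega}<\infty$. With $\mu$ in hand, for any fixed $k$, $h'$ and any bounded smooth cylindrical $g$ the left-hand side $\int_\Omega g\,d\ang{DX_n,k\otimes h'}$ converges to $\int_\Omega g\,d\ang{\mu,k\otimes h'}$ by weak-* convergence, while the right-hand side converges because $X_n\to X$ in $L^2\bra{\Prob;K}$ and $\partial_h^* g=\partial_h g - g\int_0^T h'\bra{s}\,dW_s \in L^2\bra{\Prob}$. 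Passing to the limit gives
\[ \int_\Omega g\,d\ang{\mu,k\otimes h'} = -\E\sqa{\ang{X,k}\partial_h^* g}\comma \]
and since the $(k_i,h'_i)$ are dense, this identifies $X$ as $BV$ with $DX=\mu$.

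I expect two points to require the most care. The first is the reconstruction of a genuinely $K\otimes L^2(0,T)$-valued measure $\mu$ from the family of scalar weak-* limits, which is the technical core and where separability of $K\otimes L^2(0,T)$ together with the total-variation control is essential. The second is that the defining identity must hold for \emph{all} smooth cylindrical $g$, whereas $C_b$-weak-* convergence directly controls only bounded $g$; I would therefore first establish the identity for bounded $g$ and then remove the boundedness by a cut-off and density argument, using that both sides depend continuously on $g$ in an appropriate sense.
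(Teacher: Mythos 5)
The core extraction step of your proposal rests on a false compactness principle. You assert that, since $\Omega = C_0\bra{0,T}$ is Polish, bounded sequences of finite signed Borel measures are sequentially weak-* compact in duality with $C_b\bra{\Omega}$. This is not true on a non-compact space: by Prokhorov's theorem, relative sequential compactness in that duality requires boundedness \emph{and uniform tightness}, and nothing in the hypotheses provides uniform tightness of the measures $\ang{DX_n, k_i\otimes h_i'}$. Concretely, if $x_n \in \Omega$ with $\nor{x_n}_\infty \to \infty$, the Dirac measures $\delta_{x_n}$ have unit total variation but admit no subsequence converging to a finite Borel measure against $C_b\bra{\Omega}$; the cluster points furnished by Banach--Alaoglu in $C_b\bra{\Omega}^*$ are in general only \emph{finitely} additive functionals, because mass can escape. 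Note moreover that compactness is not needed to identify the limits of the left-hand sides: one already has $\int_\Omega g\, d\ang{DX_n,k\otimes h'} = -\E\sqa{\ang{X_n,k}\partial_h^* g} \to -\E\sqa{\ang{X,k}\partial_h^* g}$ from the $L^2$ convergence alone. What is genuinely at stake is whether the limiting functional $g \mapsto -\E\sqa{\ang{X,k}\partial_h^* g}$, which is bounded in sup norm on cylindrical functions, is represented by a countably additive measure --- and a bounded functional on $C_b$ of a non-compact Polish space need not be. Your proof assumes exactly this hard point for free; the two issues you flag as ``requiring the most care'' (reassembling the vector measure, unbounded test functions) are secondary by comparison.

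The paper handles this differently, precisely to sidestep the tightness problem: it invokes the argument in the last part of Theorem 4.1 of \cite{Ambrosio2010}, where $BV$ is characterized by the finiteness of quantities that are lower semicontinuous under $L^2$ convergence --- the relaxed total variation over smooth cylindrical approximations, equivalently the Ornstein--Uhlenbeck semigroup bound $\limsup_{t\to 0^+}\E\abs{\nabla T_t X} < \infty$. In that scheme one picks, for each $n$, smooth cylindrical $u_{n,j} \to X_n$ realizing $\abs{DX_n}\bra{\Omega}$ as $\lim_j \E\abs{\nabla u_{n,j}}$, diagonalizes to obtain a smooth sequence converging to $X$ in $L^2$ whose gradients have $\liminf$ of $L^1$ norms bounded by $\sup_n \abs{DX_n}\bra{\Omega}$, and concludes $X \in BV$ from the equivalence theorem. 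The actual construction of the derivative measure is carried out once and for all inside that equivalence, using the Gaussian and cylindrical structure (finite-dimensional reductions, where compactness of measures is available), not by abstract weak-* compactness on $\Omega$ itself. To repair your argument you would have to prove uniform tightness of the measures $\ang{DX_n,k\otimes h'}$, which in effect amounts to reconstructing that machinery.
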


We discuss now the notion of measure-second derivative, which is precisely the regularity that we are going to prove for the maximum of the Wiener process. 

\begin{definition}
A real random variable $X \in \cD^{1,2}\bra{\Prob}$ is said to admit a measure $D^2X$ as \emph{second derivative} if the $L^2(0,T)$-valued random variable $\nabla X$ is $BV$.
\end{definition}

\begin{remark} If $X$ admits a second derivative $D^2X$, it follows that, for every $k',h' \in L^2(0,T)$ and every cylindrical smooth function $g$, we can integrate by parts twice:
\[ \int_\Omega g \, d\ang{D^2X,k'\otimes h'} = \E\qua{X\partial_k^*(\partial_h^* g)} \period\]
\end{remark}

After Lemma 1 in \cite{Pratelli2012}, any $L^2(0,T)$-valued measure can be identified with a measure on the product space $\Omega\times[0,T]$ and, by disintegration, if $X$ is a $BV$ function, then there exists some family of real-valued Borel measures $\bra{D_tX}_{0 \le t \le T }$ on $\Omega = C_0\bra{0,T}$ such that, for $h' \in L^2(0,T)$ and any smooth function $g$, it holds
\[ \int_\Omega g \, d\ang{DX, h'} = \int_0^T h'\bra{t} \int_\Omega g\, dD_t X dt \period\]
Such a family of measures is then unique up to Lebesgue-negligible sets in $[0,T]$. Moreover, if $X \in \cD^{1,2}\bra{\Prob}$, then $\mathscr{L}$-a.e.\ $t\in[0,T]$, the measure $D_t X$ is absolutely continuous with respect to $\Prob$ and its density is given by $\partial_t X$, so that one finds as a special case the usual identification of a Malliavin derivative $\nabla X$ with a process $\bra{\partial_t X}_{0\le t \le T}$.

When $K = L^2(0,T)$, since $K\otimes L^2(0,T) \sim L^2([0,T]^2, \mathscr{L}^2)$, a similar argument proves that if $X$ admits a second derivative as a measure $D^2X$, this can be identified with a real measure on the product space $\Omega\times[0,T]^2$. Moreover, it can be proved that there exists some family of real-valued measures $\bra{D^2_{s,t}X}_{0\le s,t \le T}$ on $\Omega$ such that for $k',h' \in L^2(0,T)$ and any smooth function $g$, it holds
\[ \int_\Omega g \, d\ang{D^2X, k'\otimes h'} = \int_{[0,T]^2} k'\bra{s} h'\bra{t}\sqa{ \int_\Omega g\, dD^2_{s,t}X }dsdt \period\]
As above, such a family of measures is then unique up to $\mathscr{L}^2$-negligible sets in $[0,T]^2$, i.e.\ if $\bra{D^2_{s,t}X'}_{0\le s,t \le T}$ is another family of measures with satisfies the conditions above, then $\mathscr{L}^2$-a.e.\ $\bra{s,t} \in [0,T]^2$, it holds $D^2_{s,t}X = D^2_{s,t}X'$. Finally, it is not difficult to argue that, $\mathscr{L}$-a.e.\ $t\in[0,T]$, the (real valued) random variable $\partial_t X$ is $BV$ and, $\mathscr{L}$-a.e.\ $s\in[0,T]$, $D_s\partial_t X = D^2_{s,t}X$.

\subsection{A concentration result for level sets}

This subsection develops the key argument to prove Theorem \ref{max2} below, only in a more general setting. Moreover, Proposition \ref{prop-conc} can be used also in the proof of Theorem \ref{max1}, although one could replace its use relying on finite-dimensional arguments only.

Given $X \in \cD^{1,2}\bra{\Prob}$, the Coarea formula (Theorem 3.7 in \cite{Ambrosio2010a}) implies that $\mathscr{L}$-a.e.\ $t \in \rnum$,  the set $\cur{X>t}$ has finite perimeter, i.e.\ $I_{\cur{X>t}}$ is a $BV$ function (see also Proposition 8 in \cite{Pratelli2012} for sufficient conditions such that, for a given $t$, $\cur{X>t}$ has finite perimeter). 
Here, we prove that it is always possible to identify some Borel representative $\tilde{X}$ for $X$, such that the perimeter measure is concentrated on the set where $\tilde{X} = t$: indeed, given a sequence of smooth cylindrical functions $(X_n) \subseteq \cD^{1,2}\bra{\Prob}$ fast convergent to $X$ in this space, i.e.\ $\norm{X_n - X_{n+1}} _{\cD^{1,2}} \le 2^{-n}$, define pointwise
\[ \tilde{X}\bra{\omega} = \limsup_{n\to \infty} X_n\bra{\omega} \comma\]
which is the so-called the quasi-continuous representative for $X$, with respect to the $(2,1)$-capacity (see e.g.\ \cite{Malliavin1997}, chapter IV). Using this notation, the following proposition holds true.

\begin{proposition}\label{prop-conc}
Given $X \in \cD^{1,2}\bra{\Prob}$, if $\cur{X>t}$ has finite perimeter, then
\[ \abs{D I_{\cur{X>t}}} \cur{ \omega \in \Omega \,\, \big| \,\, \tilde{X}\bra{\omega} \neq t} = 0 \comma\]
where $\abs{DI_{\cur{X>t}}}$ denotes the total variation measure of $DI_{\cur{X>t}}$.
\end{proposition}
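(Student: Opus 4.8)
The plan is to realize $DI_{\cur{X>t}}$ as a weak-$*$ limit of derivatives of smooth random variables whose mass is a priori concentrated, in $\Prob$-measure, on a thin slab around the level $t$, and then to transfer this concentration to the limit measure. Fix a smooth nondecreasing $\phi_\epsilon\colon\rnum\to[0,1]$ with $\phi_\epsilon\equiv0$ on $(-\infty,t]$ and $\phi_\epsilon\equiv1$ on $[t+\epsilon,\infty)$, so that $\phi_\epsilon'\ge0$ is supported in $[t,t+\epsilon]$. Since $\phi_\epsilon$ is smooth and Lipschitz, $u_\epsilon:=\phi_\epsilon(X)\in\cD^{1,2}\bra{\Prob}$ with $\nabla u_\epsilon=\phi_\epsilon'(X)\nabla X$, hence $Du_\epsilon=\phi_\epsilon'(X)\nabla X\cdot\Prob$ and its total variation has density $\phi_\epsilon'(X)\abs{\nabla X}$. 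The decisive structural fact is that both measures vanish, in $\Prob$-measure, outside the slab $\cur{t\le X\le t+\epsilon}$. Moreover $u_\epsilon\to I_{\cur{X>t}}$ in $L^2\bra{\Prob}$ as $\epsilon\downarrow0$ by dominated convergence.

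First I would identify the limit and control the mass. Inserting $u_\epsilon$ into the defining identity and using $u_\epsilon\to I_{\cur{X>t}}$ in $L^2\bra{\Prob}$ gives, for every cylindrical smooth $g$ and every $h'\in L^2(0,T)$, the convergence $\int_\Omega g\,d\ang{Du_\epsilon,h'}=-\E\qua{u_\epsilon\,\partial_h^* g}\to-\E\qua{I_{\cur{X>t}}\,\partial_h^* g}=\int_\Omega g\,d\ang{DI_{\cur{X>t}},h'}$. The Coarea formula (Theorem 3.7 in \cite{Ambrosio2010a}) yields $\abs{Du_\epsilon}\bra{\Omega}=\E\qua{\phi_\epsilon'(X)\abs{\nabla X}}=\int_t^{t+\epsilon}\phi_\epsilon'(\tau)\,\abs{DI_{\cur{X>\tau}}}\bra{\Omega}\,d\tau$, which stays uniformly bounded along a suitable sequence $\epsilon_k\downarrow0$, the finite-perimeter hypothesis at $t$ together with local integrability of $\tau\mapsto\abs{DI_{\cur{X>\tau}}}\bra{\Omega}$ providing the bound. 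Consequently the weak-$*$ limit of $Du_{\epsilon_k}$ is the already existing measure $DI_{\cur{X>t}}$, cf.\ Proposition \ref{prop-crit}, and the total variation is lower semicontinuous along the sequence.

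Next I would localize. Write $\cur{\tilde X\neq t}=\cur{\tilde X>t}\cup\cur{\tilde X<t}$ and exhaust these quasi-open sets by $\cur{\tilde X\ge t+\eta}$ and $\cur{\tilde X\le t-\eta}$, $\eta>0$. On $\cur{X<t}$ one has $u_\epsilon=0$ for every $\epsilon$, while on $\cur{X>t+\eta}$ one has $u_\epsilon=1$ as soon as $\epsilon<\eta$; in either case $u_\epsilon$ is locally constant, so $Du_\epsilon$ carries no mass there. The aim is to push this vanishing through the weak-$*$ limit, testing $\abs{DI_{\cur{X>t}}}$ against cutoffs $\chi(\tilde X)$ with $\chi$ smooth, $\chi\equiv1$ on $[t+\eta,\infty)$ and $\chi\equiv0$ on $(-\infty,t+\eta/2]$, together with the mirror image below $t$; letting $\eta\downarrow0$ then gives $\abs{DI_{\cur{X>t}}}\bra{\cur{\tilde X\neq t}}=0$.

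The main obstacle is precisely this last transfer, since $\abs{DI_{\cur{X>t}}}$ is expected to be singular with respect to $\Prob$ (this being the very phenomenon the paper exhibits), so the localization cannot be read off $\Prob$-a.e.\ representatives of $X$: the sets $\cur{\tilde X\gtrless t\pm\eta}$ are only quasi-open, not open in $C_0\bra{0,T}$. Two ingredients are then needed. First, one must show that $\abs{DI_{\cur{X>t}}}$ does not charge sets of zero $(2,1)$-capacity; granting this, the fast-convergent sequence $X_n$ defining $\tilde X$ yields $\chi(X_n)\to\chi(\tilde X)$ both $\abs{DI_{\cur{X>t}}}$-a.e.\ and boundedly, so $\int_\Omega\chi(X_n)\,d\abs{DI_{\cur{X>t}}}\to\int_\Omega\chi(\tilde X)\,d\abs{DI_{\cur{X>t}}}$. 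Second, one needs a lower semicontinuity estimate valid on quasi-open sets, giving $\int_\Omega\chi(\tilde X)\,d\abs{DI_{\cur{X>t}}}\le\liminf_k\int_\Omega\chi(X)\,d\abs{Du_{\epsilon_k}}=0$, the last equality holding once $\epsilon_k<\eta/2$ because $\chi\,\phi_{\epsilon_k}'\equiv0$ by disjointness of supports. I expect the capacity estimate to be the crux; once it is in place, the remaining passage to the limit is routine and the symmetric argument below the level $t$ closes the proof.
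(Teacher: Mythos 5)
There is a genuine gap, and it sits exactly where your scaffolding is built: the uniform mass bound for $Du_{\epsilon_k}$. The hypothesis that $\cur{X>t}$ has finite perimeter gives no control whatsoever on the perimeters of the \emph{nearby} level sets $\cur{X>\tau}$: lower semicontinuity of the total variation under $L^1$ convergence only gives $\abs{DI_{\cur{X>t}}}\bra{\Omega}\le\liminf_{\tau\to t}\abs{DI_{\cur{X>\tau}}}\bra{\Omega}$, which is the wrong direction, and the coarea formula only gives integrability of $\tau\mapsto\abs{DI_{\cur{X>\tau}}}\bra{\Omega}$. Integrability is perfectly compatible with a blow-up like $\abs{DI_{\cur{X>\tau}}}\bra{\Omega}\sim\abs{\tau-t}^{-1/2}$, in which case
\[ \abs{Du_\epsilon}\bra{\Omega}=\int_t^{t+\epsilon}\phi_\epsilon'\bra{\tau}\abs{DI_{\cur{X>\tau}}}\bra{\Omega}\,d\tau \ \ge\ \inf_{\tau\in(t,t+\epsilon)}\abs{DI_{\cur{X>\tau}}}\bra{\Omega}\ \longrightarrow\ \infty \comma\]
for \emph{every} choice of $\phi_\epsilon$ and every sequence $\epsilon_k\downarrow0$; no subsequence can rescue the bound. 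This is not a pathology of infinite dimensions: already on $\bra{\rnum,\gamma_1}$ the function $u\bra{x}=x^2\bra{2+\sin\bra{1/x}}$ (cut off at infinity) lies in the Sobolev space, $\cur{u>0}$ has zero perimeter, yet $\cur{u>\tau}$ has of order $\tau^{-1/2}$ boundary points as $\tau\downarrow0$. So the slab-approximation cannot, in general, realize $DI_{\cur{X>t}}$ as a weak-$*$ limit with bounded masses, and the whole lower-semicontinuity transfer collapses with it. A second, independent gap is that even granting bounded masses, the ``lower semicontinuity on quasi-open sets'' you invoke is not routine: convergence of $Du_{\epsilon_k}$ is only known against cylindrical test functions, and upgrading this to set-wise lower bounds against quasi-continuous weights requires tightness plus a quantitative form of the capacity/measure absolute continuity; you flag this as the crux but do not supply it.

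The paper's proof avoids both problems by never approximating the measure at all: it approximates only the \emph{test functions}. One integrates by parts the already existing measure against $\phi\bra{X_n}\psi$, where $\phi$ is smooth with compact support in $(-\infty,t)$ and $\bra{X_n}$ is the fast-convergent smooth sequence defining $\tilde X$; the right-hand side $-\E\sqa{I_{\cur{X>t}}\bra{\psi\,\partial_h\bra{\phi\circ X}+\bra{\phi\circ X}\partial_h^*\psi}}$ vanishes identically in the limit because $\phi\circ X$ and $\phi'\bra{X}$ are supported in $\cur{X<t}$, where $I_{\cur{X>t}}=0$; and the left-hand side converges to $\int_\Omega\bra{\phi\circ\tilde X}\psi\,d\ang{h',DI_{\cur{X>t}}}$ by dominated convergence, using precisely the capacity ingredient you correctly identified (the perimeter measure is a restriction of $\rho_1$ by Theorem 1.3 in \cite{Ambrosio2011}, and $\rho_1$ does not charge capacity-null sets by Theorem 9 in \cite{Feyel1992}), so that $X_n\to\tilde X$ holds $\abs{DI_{\cur{X>t}}}$-a.e. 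This yields $\abs{DI_{\cur{X>t}}}\bra{\tilde X<t}=0$ directly, and the set $\cur{\tilde X>t}$ follows from $DI_{\cur{X>t}}=-DI_{\cur{X\le t}}$. Your capacity observation is the right key; the recommendation is to use it to pass to the limit in the test functions, not in the measures.
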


\begin{proof}
We provide here an argument which relies upon the following general facts: the perimeter measure is a suitable restriction of the $1$-codimensional spherical Hausdorff measure $\rho_1$ (Theorem 1.3 in \cite{Ambrosio2011}); moreover $\rho_1$ is absolutely continuous with respect to the $(2,1)$-capacity, i.e.\ sets with null capacity are $\rho_1$-negligible (Theorem 9 in \cite{Feyel1992}).

Keeping the notation introduced above, from the the general theory of quasi-sure analysis it follows that the sequence $\cur{X_n\bra{\omega}}_{n\ge1}$ converges to $\tilde{X}\bra{\omega}$ for every $\omega \in \Omega$, possibly with the exception of a set of null capacity. Using the facts stated above, the convergence holds both $\Prob$-almost surely and $\rho_1$-a.e.\ and therefore it holds $\abs{D I_{\cur{X>t}}}$-a.e.

Let $h'\in L^2(0,T)$, let $\psi$ be a smooth cylindrical function on $\Omega$ and let $\phi$ be a smooth function, defined on $\rnum$, with compact support contained in the half line $(-\infty, t)$, so that $\phi_n = \phi \circ X_n$ is also a smooth cylindrical function for any $n \ge 1$. The following integration by parts holds:
\[ \int_\Omega \phi_n\psi\, d\ang{h', DI_{\cur{X>t}}} = -\E \sqa{I_{\cur{X>t}} \bra{\psi \, \partial_h\phi_n + \phi_n\, \partial^*_h\psi}}\period\]
As $n \to \infty$, the left hand side converges to \[\int_\Omega \bra{\phi \circ  \tilde{X}}  \psi \, d\ang{h', DI_{\cur{X>t}}}\] by Lebesgue's dominated convergence theorem, while the right hand side converges to
\[ -\E \sqa{I_{\cur{X>t}} \bra{\psi \, \partial_h\bra{\phi \circ X} + \bra{ \phi \circ X } \partial^*_h\psi}} = 0 \displaycomma \]
by the assumption on the support of $\phi$. Since $\phi$, $\psi$ and $h'$ are arbitrary, we conclude that
\[ \abs{DI_{\cur{X>t}}} \bra{\tilde{X}<t} = 0\period\]
Since $DI_{\cur{X>t}} = -DI_{\cur{X\le t}}$,  the case of $\{\tilde{X}>t \}$ follows similarly.
\end{proof}

\section{Main results}

For $0\le a\le b\le T$, define \[M_{[a,b]} = \sup\cur{W_t: a\le t \le b}\displaycomma \quad \sigma_{[a,b]} = \inf\cur{ a\le t \le b: W_t = M_{[a,b]}}\comma\]
and write $M = M_{[0,T]}$, $\sigma = \sigma_{[0,T]}$. It is well known (see \cite{Nualart1995}, pp.\ 91-98) that $M \in \cD^{1,p}\bra{\Prob}$ for any $p\ge1$, with $\nabla M = I_{[0,\sigma[}$, i.e.\
\begin{equation}\label{partial-tM}\partial_t M = I_{\cur{\sigma >t}} = I_{\cur{M_{[0,t]} < M_{[t,T]}}}\period \end{equation}
For any $t$, write $\Delta_t M =  M_{[t,T]}-M_{[0,t]}$, which is Mallavin differentiable and has an absolutely continuous law with bounded density $l_t$ (this last fact is elementary, since the joint law of $(M_{[0,t]},W_t)$ is explicitly known). 

\begin{theorem}\label{max1}
The random variable $M$ admits a second derivative as a measure $D^2M = \bra{D^2_{s,t} M}_{0\le s,t \le T}$, with finite total variation.

Moreover, $\mathscr{L}$-a.e.\ $t\in [0,T]$, for any $h'\in L^2(0,T)$ and any smooth cylindrical function $g$, it holds
\begin{equation}\label{chain-max}  \int_0^T h'\bra{s}  \sqa{\int_\Omega g\, dD^2_{s,t}M } ds = l_t\bra{x} \E \sqa{ g \sqa{ h\bra{\sigma_{[t,T]}} - h\bra{\sigma_{[0,t]}}}  \, \big| \, \Delta_t M = x }_{|x=0} \displaycomma\end{equation}
where the r.h.s.\ side is intended as its continuous version (which exists), evaluated at zero.
\end{theorem}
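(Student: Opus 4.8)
The plan is to reduce the whole statement to a single level-set computation and then control it in the parameter $t$. Since $\nabla M = I_{[0,\sigma[}$, i.e.\ $\partial_t M = I_{\cur{\Delta_t M > 0}}$ by \eqref{partial-tM}, and since the second derivative is identified (up to $\mathscr L^2$-null sets) through $D^2_{s,t}M = D_s\partial_t M$, everything amounts to showing that, for $\mathscr{L}$-a.e.\ $t$, the indicator $\partial_t M = I_{\cur{\Delta_t M > 0}}$ is a real $BV$ function and to computing $D_s\partial_t M$. The starting point is the explicit Malliavin derivative of the increment $\Delta_t M = M_{[t,T]} - M_{[0,t]}$: differentiating along $h$ gives $\partial_h M_{[t,T]} = h\bra{\sigma_{[t,T]}}$ and $\partial_h M_{[0,t]} = h\bra{\sigma_{[0,t]}}$, so that $\nabla \Delta_t M = I_{[\sigma_{[0,t]},\sigma_{[t,T]}[}$ and hence $\abs{\nabla \Delta_t M}^2 = \sigma_{[t,T]} - \sigma_{[0,t]} \in (0,T]$, which is strictly positive $\Prob$-a.s.

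Next I would establish the surface-measure (co-area) formula for the zero level set of $\Delta_t M$. Fix a smooth increasing $\phi_\varepsilon$ approximating $I_{(0,\infty)}$ with $\phi'_\varepsilon$ a mollifier of $\delta_0$, so that $\phi_\varepsilon\bra{\Delta_t M} \in \cD^{1,2}\bra{\Prob}$, $\phi_\varepsilon\bra{\Delta_t M} \to I_{\cur{\Delta_t M > 0}}$ in $L^2$, and $D\phi_\varepsilon\bra{\Delta_t M} = \phi'_\varepsilon\bra{\Delta_t M}\nabla\Delta_t M.\Prob$. Disintegrating the law of $\Delta_t M$, for any test $g$ and any $h'$,
\[ \E\sqa{ g\, \phi'_\varepsilon\bra{\Delta_t M}\, \partial_h\Delta_t M} = \int_\rnum \phi'_\varepsilon\bra{x}\, l_t\bra{x}\, \E\sqa{ g\,\partial_h\Delta_t M \,\big|\, \Delta_t M = x}\, dx \period\]
As $\varepsilon \to 0$ the right-hand side converges to the value at $x=0$ of the continuous version of $x \mapsto l_t\bra{x}\E\sqa{g\,\partial_h\Delta_t M \mid \Delta_t M = x}$, and since $\partial_h\Delta_t M = h\bra{\sigma_{[t,T]}} - h\bra{\sigma_{[0,t]}}$, this is exactly the right-hand side of \eqref{chain-max}. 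Running the same computation with $\partial_h\Delta_t M$ replaced by $\abs{\nabla\Delta_t M}$ yields $\abs{D\partial_t M}\bra{\Omega\times[0,T]} = l_t\bra{0}\,\E\sqa{\abs{\nabla\Delta_t M} \mid \Delta_t M = 0} \le \sqrt T\, l_t\bra{0}$, so each $\partial_t M$ is $BV$; Proposition \ref{prop-conc} moreover concentrates $D\partial_t M$ on $\cur{\widetilde{\Delta_t M} = 0}$, i.e.\ on trajectories with $M_{[0,t]} = M_{[t,T]}$ (the link to Theorem \ref{max2}).

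To upgrade these pointwise-in-$t$ statements to the assertion that the $L^2(0,T)$-valued random variable $\nabla M$ is itself $BV$ with finite total variation, I would approximate $M$ by the maxima $M^n = \max_{0\le i\le n} W_{iT/n}$ over dyadic partitions: each $M^n$ is a Lipschitz function of finitely many Gaussian coordinates, so $\nabla M^n$ is a step process, $BV$ in the finite-dimensional sense, with $\nabla M^n \to \nabla M$ in $L^2\bra{\Prob;L^2(0,T)}$, and its derivative is a finite sum of surface measures on the ``tie'' sets $\cur{W_{t_i}=W_{t_j}=M^n}$ weighted by the jump $\nor{I_{[0,t_i[}-I_{[0,t_j[}}_{L^2} = \sqrt{\abs{t_i-t_j}}$. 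The uniform bound $\sup_n\abs{D\nabla M^n}\bra{\Omega} < \infty$ then follows from the per-slice estimate above together with the fact, elementary from the explicit joint law of $\bra{M_{[0,t]},W_t}$, that $l_t\bra{0} = \sqrt{2/(\pi T)}$ is constant in $t$, whence $\int_0^T l_t\bra{0}\,dt = \sqrt{2T/\pi} < \infty$. Proposition \ref{prop-crit} then gives that $\nabla M$ is $BV$, i.e.\ $M$ admits a second derivative $D^2M$ as a measure of finite total variation, and identifying the slices via $D^2_{s,t}M = D_s\partial_t M$ produces \eqref{chain-max}.

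The main obstacle I expect is the limit $\varepsilon\to 0$ in the surface-measure formula: one must show that $x\mapsto l_t\bra{x}\,\E\sqa{g\,\partial_h\Delta_t M\mid\Delta_t M = x}$ genuinely admits a continuous representative near $x=0$, so that the delta-type limit is well defined and independent of the mollifier. This requires more than the boundedness of $l_t$ and is the technical heart of the argument; it can be approached either through the explicit joint law of $\bra{M_{[0,t]}, M_{[t,T]}, W_t}$ or through an iterated Malliavin integration by parts exploiting the non-degeneracy $\abs{\nabla\Delta_t M}>0$. A secondary difficulty is the bookkeeping needed to glue the continuum of slicewise derivatives into a single $L^2(0,T)\otimes L^2(0,T)$-valued measure with the correct total-variation norm, for which the finite-dimensional approximation together with Proposition \ref{prop-crit} provides the cleanest route.
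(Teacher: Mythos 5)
Your treatment of the second assertion, formula \eqref{chain-max}, is fine in spirit: the mollification/disintegration argument you sketch is essentially the content of Proposition 8 in \cite{Pratelli2012}, which is exactly what the paper invokes (together with the identification $D^2_{s,t}M = D_s\partial_t M$ and $\nabla \Delta_t M = I_{[\sigma_{[0,t]},\sigma_{[t,T]}]}$); the continuity-at-zero issue you flag honestly is real, but the paper outsources it to that same proposition. The genuine gap is in the first and main assertion, the finiteness of $\abs{D^2M}\bra{\Omega}$. Your logic is: (i) for a.e.\ $t$, $\partial_t M$ is $BV$ with slice mass $\le \sqrt{T}\,l_t\bra{0}$; (ii) $\int_0^T l_t\bra{0}\,dt<\infty$; hence (iii) $\nabla M$ is $BV$ as an $L^2(0,T)$-valued variable. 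Step (iii) does not follow. The total variation of a Hilbert-space-valued measure is $\sup_{(A_j)}\sum_j \nor{D\nabla M\bra{A_j}}$, with Hilbert norms inside the sum, and it is \emph{not} controlled by integrating the slicewise total variations in $t$: if, say, the slice measures $D\partial_t M$ were mutually singular as $t$ varies (concentrated on disjoint subsets of $\Omega$), every slice could have bounded mass while the vector-valued total variation is infinite (a partition into $N$ pieces adapted to the supports gives a lower bound of order $\sqrt{N}$). Note that the paper records only the implication in the direction opposite to the one you need: \emph{if} $D^2X$ exists, \emph{then} a.e.\ $\partial_t X$ is $BV$ with $D_s\partial_t X = D^2_{s,t}X$; the converse is false in general, so slicewise $BV$ plus integrability of slice masses cannot, by itself, produce $D^2M$.

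The loss is visible concretely in your own discretization. The measure $D\nabla M_n$ decomposes over pairs $m<k$ into tie-set terms weighted by $\nor{I_{[0,kT/n]}-I_{[0,mT/n]}}_{L^2}=\sqrt{\bra{k-m}T/n}$, so the bound one must prove is on $\sum_{m<k}\sqrt{\bra{k-m}T/n}\,\abs{DI_{\cur{W_m\le W_k}}}\bra{E_k}$, whereas integrating slice masses over $t$ only controls the analogous sum with weight $\bra{k-m}T/n$ — strictly weaker precisely for the small-gap pairs that dominate. Producing the correct bound is the heart of the paper's proof and occupies most of it: by the concentration result (Proposition \ref{prop-conc}) and independence of increments, each $\abs{DI_{\cur{W_m\le W_k}}}\bra{E_k}$ factors into three blocks, which are then estimated by the Gaussian random-walk Lemma \ref{lemma-random-walk} — the Sparre Andersen identity $\gamma\bra{A_n}=\binom{2n}{n}4^{-n}$ giving $C n^{-1/2}$ for the outer blocks, and the cyclic-permutation identity $\gamma\bra{A_n\,|\,W_n=0}=1/n$ giving the crucial $C\bra{k-m}^{-1}$ for the middle block — after which the weighted sum is a Riemann sum for $\int_0^1\int_t^1 \bra{t\bra{s-t}\bra{1-s}}^{-1/2}ds\,dt<\infty$, and Proposition \ref{prop-crit} closes the argument. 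None of this joint, pair-by-pair control appears in your proposal, and it cannot be recovered from the slicewise estimates alone; your sentence claiming that $\sup_n\abs{D\nabla M^n}\bra{\Omega}<\infty$ ``follows from the per-slice estimate'' is exactly where the missing proof lives.
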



\begin{remark}
The random variable $M$, being the supremum of a family of linear functionals, is convex: from  \cite{Bogachev2010}, Corollary 6.5.5, we already recover the existence of $D^2_{h_1,h_2} M$ along every pair $h_1, h_2 \in L^2(0,T)$.
\end{remark}


\begin{theorem}\label{max2}

The finite measure $\abs{D^2M}$ is concentrated on the paths that attain the global maximum at least twice: in particular, it is singular with respect to $\Prob$.
\end{theorem}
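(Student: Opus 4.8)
The plan is to prove that $\abs{D^2M}$ is concentrated on the set of paths attaining the global maximum at least twice, by combining the concentration result of Proposition~\ref{prop-conc} with the coarea structure implicit in the formula \eqref{chain-max}. The starting observation is that, by \eqref{partial-tM}, for $\mathscr{L}$-a.e.\ $t$ the Malliavin derivative component $\partial_t M$ equals the indicator $I_{\cur{\sigma > t}} = I_{\cur{\Delta_t M > 0}}$, so that the family of random variables $\partial_t M$ are (up to the choice of representative) indicator functions of super-level sets of the smooth-ish random variable $\Delta_t M$ at level $0$. Since, as recorded after Proposition~\ref{prop-conc}, one has $\mathscr{L}$-a.e.\ that $\partial_t M$ is $BV$ with $D_s \partial_t M = D^2_{s,t} M$, the second-derivative slices $D^2_{s,t}M$ are exactly perimeter-type measures $D_s I_{\cur{\Delta_t M > 0}}$ of the set $\cur{\Delta_t M > 0}$.

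First I would apply Proposition~\ref{prop-conc} to the random variable $\Delta_t M$ at level $t=0$: for $\mathscr{L}$-a.e.\ fixed $t$ the set $\cur{\Delta_t M > 0}$ has finite perimeter (this follows from the finite total variation of $D^2M$ established in Theorem~\ref{max1}), and hence the total variation $\abs{D I_{\cur{\Delta_t M > 0}}}$ is concentrated on the set $\cur{\widetilde{\Delta_t M} = 0}$, i.e.\ on the paths with $M_{[t,T]} = M_{[0,t]}$. The crucial point is to interpret this event geometrically: $\Delta_t M = 0$ means that the maximum of $W$ over $[0,t]$ equals the maximum over $[t,T]$, i.e.\ the global maximum $M$ is attained both in $[0,t]$ and in $[t,T]$. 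In particular, whenever $\sigma_{[0,t]} < t < \sigma_{[t,T]}$ (or the maximum is attained on both sides of $t$), the path attains its global maximum at (at least) two distinct times.

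Next I would integrate over $t$. The disintegration identity for $D^2M$ expresses $\abs{D^2M}$ as a superposition over $t\in[0,T]$ of the slice measures $\abs{D^2_{\cdot,t}M} = \abs{D_\cdot\, \partial_t M}$; more precisely, the total variation $\abs{D^2M}$ on $\Omega$ is controlled by $\int_0^T \abs{D_\cdot \partial_t M}(\Omega)\,dt$, and for each $t$ the slice measure lives on $\cur{\widetilde{\Delta_t M} = 0}$ by the previous step. The remaining task is to show that, after integrating the slices concentrated on $\cur{\Delta_t M = 0}$ over all $t$, the resulting measure on $\Omega$ charges only those $\omega$ for which there exist two distinct times $t_1 \neq t_2$ with $W_{t_1} = W_{t_2} = M(\omega)$. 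This is where formula \eqref{chain-max} is useful: its right-hand side, evaluated through the conditional expectation given $\Delta_t M = 0$, sees exactly the configuration $\sigma_{[0,t]}$ and $\sigma_{[t,T]}$, the two argmax times on the two sides of $t$, which coincide only on an exceptional set.

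The main obstacle I expect is the passage from the \emph{slicewise} concentration (for each fixed $t$, on $\cur{\Delta_t M = 0}$) to a \emph{global} concentration statement on $\Omega$ for the integrated measure $\abs{D^2M}$: one must show that the union over $t$ of the sets $\cur{\Delta_t M = 0}$, weighted by the slice measures, does not leak onto paths with a unique maximizer. The delicate case is the path for which $\sigma_{[0,t]} = \sigma_{[t,T]} = t$, i.e.\ the unique global maximizer sits exactly at the slicing time $t$; here $\Delta_t M = 0$ holds with a single argmax, so one must argue that the set of such $(\omega,t)$ is negligible for the integrated measure. I would handle this by a Fubini/disintegration argument showing that for $\mathscr{L}$-a.e.\ $t$ the slice measure $\abs{D^2_{\cdot,t}M}$ assigns no mass to $\cur{\sigma = t}$ (a single-time event, negligible because $\sigma$ has a density), so that $\abs{D^2M}$-almost every path in the support has its two coinciding maxima at distinct times, whence two global maximizers. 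Finally, singularity with respect to $\Prob$ is immediate, since under the Wiener measure the maximum is attained at a \emph{unique} time almost surely, so $\Prob$ charges no path with two maximizers while $\abs{D^2M}$ charges only such paths.
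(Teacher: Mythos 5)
You have the right skeleton, and it matches the paper's strategy up to the crucial point: reduce to the slice measures $D^2_{s,t}M = D_s I_{\cur{\Delta_t M>0}}$, invoke Proposition \ref{prop-conc} (using Theorem \ref{max1} to know the slices exist) to get concentration on $\cur{\widetilde{\Delta_t M}=0}$, and then observe that the only way such a path can fail to have two maximizers is the corner configuration $M_{[0,t]} = W_t = M_{[t,T]}$, i.e.\ $\sigma = t$. You correctly isolate this as the delicate case, but your way of disposing of it is invalid. You claim the slice measure $\abs{D^2_{\cdot,t}M}$ gives no mass to $\cur{\sigma=t}$ ``because $\sigma$ has a density.'' A density with respect to what? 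The law of $\sigma$ is a statement about $\Prob$, and $\Prob$-negligibility implies nothing for $\abs{D^2_{\cdot,t}M}$: this measure is \emph{singular} with respect to $\Prob$ --- that is the very content of the theorem --- and indeed its entire support $\cur{\Delta_t M = 0}$ is already a $\Prob$-null set. The same objection applies to your appeal to \eqref{chain-max}: saying the two argmax times ``coincide only on an exceptional set'' begs the question of which measure the set is exceptional \emph{for}. A Fubini argument does not rescue this either: in the disintegration $\abs{D^2_{s,t}M}\,ds\,dt$ the $\omega$-marginal depends on $t$, so the graph $\cur{(\omega,t): \sigma(\omega)=t}$ is not automatically null; nullity of that graph would require the $t$-conditional given $\omega$ to be nonatomic, which is exactly what has to be proven.

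The missing idea --- the actual core of the paper's proof --- is a mechanism that \emph{transfers} $\Prob$-smallness into smallness for the derivative measure. The paper fixes $h'$, splits $h' = h'I_{[0,t[} + h'I_{[t,T]}$, and, for the half with $h'=0$ a.e.\ on $[0,t]$, tests $\mu = \ang{h', D\partial_t M}$ against $\phi\circ\bra{M_{[0,t]}-W_t}$ with $\phi$ supported in $(-\infty,\epsilon)$. The key observation is that $\partial_h\bra{M_{[0,t]}-W_t}=0$ for such $h$, so integration by parts leaves only the term containing $\partial_h^*\psi$, yielding the quantitative bound $\abs{\bra{\psi\mu}\cur{M_{[0,t]}-W_t<\epsilon}} \le C\bra{\psi,h'}\,\Prob\bra{M_{[0,t]}-W_t\le\epsilon}^{1/2}$. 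Letting $\epsilon\to 0$ and using $\Prob\bra{M_{[0,t]}=W_t}=0$ shows that this piece of $\mu$ is concentrated on $\cur{M_{[0,t]}>W_t}$, and symmetrically the other piece on $\cur{M_{[t,T]}>W_t}$; combined with $\Delta_t M = 0$ this forces two \emph{distinct} maximizers, one on each side of $t$. Note that the $\Prob$-null statement you wanted to use does appear here, but only filtered through this directional-invariance estimate; without such a transfer mechanism the corner configuration is not ruled out, and your proof has a genuine gap precisely at the point you yourself flagged as the main obstacle. (Your final remark on singularity is fine: $\Prob$-a.s.\ the maximum is attained at a unique time, so the two measures are carried by disjoint sets.)
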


\begin{remark}
It is also possible to refine the previous result and prove that $\mathscr{L}^2$-a.e.\ $s\le t$ $\abs{D^2_{s,t}M}$ is concentrated on the paths that attain the global maximum at least twice, once before $s$ and once after $t$.
\end{remark}

\begin{corollary}
The random time $\sigma$ is $BV$, with derivative $D\sigma$ concentrated on the trajectories that attain the global maximum at least twice.
\end{corollary}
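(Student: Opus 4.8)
The plan is to realise $\sigma$ as a contraction of the Malliavin derivative $\nabla M$ against a fixed direction, and then to transfer both the $BV$-regularity and the concentration property directly from $D^2M$. The starting point is the pathwise identity
\[ \sigma = \int_0^T I_{\cur{\sigma > t}}\, dt = \int_0^T \partial_t M \, dt = \ang{\nabla M, \mathbf{1}} \comma \]
where $\mathbf{1} = I_{[0,T]} \in L^2(0,T)$ is the constant function equal to one; here I use the explicit form $\partial_t M = I_{\cur{\sigma > t}}$ from \eqref{partial-tM} together with $0 \le \sigma \le T$. Thus $\sigma$ is the scalar random variable obtained by pairing the $K$-valued random variable $\nabla M$ (with $K = L^2(0,T)$) against the fixed element $\mathbf{1} \in K$.

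Next I would observe that slicing a $K$-valued $BV$ function against a fixed $k \in K$ produces a real-valued $BV$ function. By Theorem \ref{max1} the random variable $\nabla M$ is $BV$, with $K\otimes L^2(0,T)$-valued derivative $D^2M$. Specializing the defining identity to $k = \mathbf{1}$ gives, for every $h' \in L^2(0,T)$ and every cylindrical smooth $g$,
\[ \int_\Omega g\, d\ang{D^2M, \mathbf{1}\otimes h'} = -\E\qua{\ang{\nabla M, \mathbf{1}}\,\partial_h^* g} = -\E\qua{\sigma\, \partial_h^* g} \period \]
This is exactly the statement that $\sigma$ is a real-valued $BV$ function, whose $L^2(0,T)$-valued measure-derivative is the slice $D\sigma := \ang{D^2M, \mathbf{1}\otimes \cdot}$; in terms of the disintegrated families this reads $D_t \sigma = \int_0^T D^2_{s,t}M\, ds$ for $\mathscr{L}$-a.e.\ $t$.

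It remains to check finiteness of the total variation and the concentration property, both of which follow from a single comparison estimate. For any Borel $A \subseteq \Omega$ and any $h'$ with $\nor{h'}_{L^2} \le 1$ one has $\ang{D^2M(A), \mathbf{1}\otimes h'} \le \abs{\mathbf{1}}\,\abs{D^2M}(A) = \sqrt{T}\,\abs{D^2M}(A)$, whence, taking the supremum over such $h'$ and over finite partitions of $A$,
\[ \abs{D\sigma}(A) \le \sqrt{T}\,\abs{D^2M}(A) \period \]
In particular $\abs{D\sigma}(\Omega) \le \sqrt{T}\, \abs{D^2M}(\Omega) < \infty$, so $\sigma$ is $BV$, and moreover $\abs{D\sigma} \ll \abs{D^2M}$. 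Since Theorem \ref{max2} asserts that $\abs{D^2M}$ is concentrated on the trajectories attaining the global maximum at least twice, this absolute continuity forces $\abs{D\sigma}$ to be concentrated on the same set, which is the claim.

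The argument is short because the corollary is genuinely a corollary: the only delicate point is the passage from the $K$-valued $BV$ property of $\nabla M$ to the scalar $BV$ property of its contraction $\ang{\nabla M, \mathbf{1}}$, namely verifying that the sliced measure $\ang{D^2M, \mathbf{1} \otimes \cdot}$ really has finite total variation. I expect this to be the main (and only mild) obstacle, and it is dispatched by the comparison estimate above, which bounds the sliced total variation by $\sqrt{T}$ times that of $D^2M$.
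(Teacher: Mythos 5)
Your proposal is correct and follows essentially the same route as the paper, whose entire proof is the identity $\sigma = \int_0^T \partial_s M\, ds = \ang{I_{[0,T[}, \nabla M}$, with the transfer of $BV$-regularity and concentration from $D^2M$ left implicit. Your slicing argument and the comparison estimate $\abs{D\sigma}(A) \le \sqrt{T}\,\abs{D^2M}(A)$ are precisely the details the paper omits, correctly supplied.
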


\begin{proof}
It follows from $\sigma = \int_0^T \partial_s M ds = \ang{I_{[0,T[} , \nabla M}$.
\end{proof}

\subsection{Proof of Theorem \ref{max1}}

The existence of $D^2M$ follows from Proposition \ref{prop-crit}: we provide a sequence of functions $\bra{M_n}_{n\ge1}$ convergent to $M$ in $\cD^{1,2}\bra{\prob}$, such that every $\nabla M_n$ is $BV$ and $\abs{D^2 M_n}\bra{\Omega}$ is bounded, uniformly in $n\ge1$.

First, we need a lemma on Gaussian random walks. Given $n\ge1$, on the standard $n$-dimensional Gaussian space $\bra{\rnum^n, \gamma = \gamma_n}$, let $\bra{X_i}_{1\le i \le n}$ be the sequence of standard projections: $X_i \bra{x} = x_i$. Let $W_0 = 0$ and $W_k = \sum_{i=1}^k X_i$, for all $k\ge1$, so that $\bra{W_k}_{k=0}^n$ is a Gaussian random walk, starting at the origin, of length $n$. Write $A_n$ for the set
\[ A_n = \bigcap _{k=0}^n \cur{ W_k \le 0 } \period\]

An asymptotic estimate (in terms of $n$) of the probabilities $\gamma\bra{A_n}$ and $\gamma\bra{A_n\,|\, W_n = 0}$ will be needed, where we write
\[ \gamma\bra{A_n \,| \,W_n = 0} =  \frac{\abs{D_\gamma I_{\cur{W_n >0}}}\bra{A_n}}{P_\gamma\cur{W_n >0}} \comma \]
and $P_\gamma$ denotes the perimeter measure with respect to $\gamma$, which, in this finite-dimensional setting, coincides with the usual Euclidean perimeter measure times the continuous representative of the density $d\gamma/d\mathscr{L}$. Moreover, the link between Euclidean $BV$ functions and $BV$ functions with respect to $\gamma$, as a special case of abstract Wiener space (see e.g.\ \cite{Ambrosio2010a}), is provided by the following identity between measures (provided that both exist):
\begin{equation}
\label{eq-bv-eu}
D_\gamma u = \frac{e^{-\frac{\abs{x}^2}{2}}}{\bra{2\pi}^{n/2}} \, D u\comma 
\end{equation}
where $Du$ denotes the Euclidean measure-derivative.

\begin{lemma}\label{lemma-random-walk}
With the notations above, for every $n\ge1$,
\[ \gamma\bra{A_n} = \binom{2n}{n} \frac{1}{4^n}  \, \textrm{ and }\,  \gamma\bra{A_n\, |\, W_n = 0} = \frac{1}{n} \period\]
\end{lemma}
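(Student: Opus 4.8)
The plan is to exploit two structural features of the walk: the increments $\bra{X_i}$ are i.i.d., symmetric and continuously distributed, so their joint law is invariant under every permutation of the indices — in particular under the cyclic shift $\tau\bra{x_1,\dots,x_n}=\bra{x_2,\dots,x_n,x_1}$ — and almost surely the partial sums $W_0,\dots,W_n$ are pairwise distinct. I would derive the first identity from fluctuation theory and the second from the cycle (ballot) lemma applied after conditioning.

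For $\gamma\bra{A_n}$, first I would observe that $p_n:=\gamma\bra{A_n}=\gamma\bra{W_1\le0,\dots,W_n\le0}$ (since $W_0=0$) is the probability that the first strict ascending ladder epoch exceeds $n$, and invoke Spitzer's identity
\[ \sum_{n\ge0}p_n\,t^n=\exp\!\bra{\sum_{k\ge1}\frac{t^k}{k}\,\gamma\bra{W_k\le0}}\period \]
Since every $W_k$ is a centered Gaussian with $\gamma\bra{W_k=0}=0$, symmetry gives $\gamma\bra{W_k\le0}=1/2$, whence the right-hand side is $\exp\bra{-\tfrac12\log\bra{1-t}}=\bra{1-t}^{-1/2}=\sum_{n\ge0}\binom{2n}{n}4^{-n}t^n$; comparing coefficients yields the first formula. (Equivalently, this is Sparre Andersen's theorem for symmetric continuous walks.)

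For $\gamma\bra{A_n\mid W_n=0}$ I would work directly with the perimeter measure, since that is how the conditioning on the null event $\cur{W_n=0}$ is defined. The half-space $\cur{W_n>0}$ has boundary the hyperplane $\Pi=\cur{\sum_i x_i=0}$, and by \eqref{eq-bv-eu} its perimeter measure is the Gaussian-weighted surface measure $\frac{e^{-\abs{x}^2/2}}{\bra{2\pi}^{n/2}}\,\mathcal{H}^{n-1}$ on $\Pi$, of total mass $P_\gamma\cur{W_n>0}$. The cyclic shift $\tau$ is an orthogonal map fixing $W_n$, hence it preserves $\gamma_n$, the half-space, the hyperplane $\Pi$ and therefore this perimeter measure. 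Writing $A_n^{(m)}$ for the set of points whose $m$-th cyclic shift lies in $A_n$, invariance gives $\abs{D_\gamma I_{\cur{W_n>0}}}\bra{A_n^{(m)}}=\abs{D_\gamma I_{\cur{W_n>0}}}\bra{A_n}$ for every $m$. On the other hand, for $\mathcal{H}^{n-1}$-a.e.\ point of $\Pi$ the values $W_0,\dots,W_{n-1}$ are distinct, and then exactly one cyclic shift keeps the path $\le0$ throughout, namely the one started right after the unique maximizer $m^*$ of $W_0,\dots,W_{n-1}$ (after that shift every partial sum equals $W_{m^*+j}-W_{m^*}\le0$, indices mod $n$, and a second such shift would force a second maximizer). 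Thus $\sum_{m=0}^{n-1}I_{A_n^{(m)}}=1$ a.e.\ on $\Pi$; summing the $n$ equal contributions of the perimeter measure, which is carried by $\Pi$, gives $n\,\abs{D_\gamma I_{\cur{W_n>0}}}\bra{A_n}=P_\gamma\cur{W_n>0}$, i.e.\ the ratio $1/n$.

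The hard part will be the rigorous passage, in the second identity, from the Gaussian measure to the codimension-one surface measure: one must check that $\tau$ genuinely preserves $\abs{D_\gamma I_{\cur{W_n>0}}}$ (immediate once the perimeter measure is identified with the Gaussian-weighted Hausdorff measure on $\Pi$ via \eqref{eq-bv-eu}, $\tau$ being an isometry of $\Pi$) and that the degenerate set, where two of $W_0,\dots,W_{n-1}$ coincide, is $\mathcal{H}^{n-1}$-null in $\Pi$, which holds since it is a finite union of proper affine subspaces. By contrast the first identity is routine once Spitzer's identity and the symmetry $\gamma\bra{W_k\le0}=1/2$ are available.
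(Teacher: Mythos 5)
Your proposal is correct and takes essentially the same route as the paper: the first identity via Sparre Andersen's/Spitzer's identity together with $\gamma\bra{W_k\le 0}=1/2$, and the second via the cycle-lemma argument applied to the cyclically invariant perimeter measure, which is concentrated on the hyperplane $\cur{W_n=0}$ and neglects the coincidence set as it is $\mathcal{H}^{n-1}$-null there. The paper's bookkeeping is only superficially different: it works with the sets $B_m$ (maximum of $W_0,\dots,W_{n-1}$ attained at index $m$, with $W_n=0$), shows $\theta^{-1}\bra{B_m}=B_{m+1}$ and that the $B_m$ a.e.\ partition $\cur{W_n=0}$, which is precisely your statement that $\sum_{m=0}^{n-1} I_{A_n^{(m)}}=1$ holds a.e.\ on $\Pi$.
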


The following estimates are then obtained using Stirling approximation for the factorial and the explicit value $P_\gamma\cur{W_n >0}= \bra{2\pi}^{-1/2}$ (as proved in \cite{Ambrosio2010a}, Corollary 3.11).

\begin{corollary}\label{coro-random-walk}
For some absolute constant $C>0$, it holds for every $n\ge1$,
\[ \gamma\bra{A_n} \le C n^{-1/2}  \, \textrm{ and }\,  \abs{D_\gamma I_{\cur{W_n >0}}}\bra{A_n} \le C n^{-1} \period\]
\end{corollary}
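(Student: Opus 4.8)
The plan is to read off both bounds directly from the two exact identities supplied by Lemma \ref{lemma-random-walk}, handling each estimate separately. Neither requires any further probabilistic input: the first is a routine asymptotic analysis of a central binomial coefficient, and the second is a mere rearrangement of the conditional-probability identity followed by substitution of the known value of the perimeter measure.

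For the first estimate I would start from $\gamma\bra{A_n} = \binom{2n}{n} 4^{-n}$ and apply Stirling's formula $m! = \sqrt{2\pi m}\,\bra{m/e}^m \bra{1 + o(1)}$ to each factorial in $\binom{2n}{n} = (2n)!/(n!)^2$. The exponential factors $\bra{2n/e}^{2n}$ and $\bra{n/e}^{2n}$ combine with $4^{-n}$ so that all exponentials cancel, leaving only the polynomial prefactor $\sqrt{4\pi n}/\bra{2\pi n} = \bra{\pi n}^{-1/2}$. Thus $\gamma\bra{A_n} \sim \bra{\pi n}^{-1/2}$ as $n \to \infty$. To upgrade this asymptotic equivalence to a bound valid uniformly in $n \ge 1$, I would note that $\sqrt{n}\,\binom{2n}{n} 4^{-n}$ is a convergent (hence bounded) sequence, so one may take $C \ge \sup_{n \ge 1} \sqrt{n}\,\binom{2n}{n} 4^{-n}$; alternatively, the elementary inequality $\binom{2n}{n} 4^{-n} \le \bra{\pi n}^{-1/2}$ (which one checks directly for small $n$ and which is standard in general) supplies the bound with an explicit constant.

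For the second estimate I would simply invert the defining relation
\[ \gamma\bra{A_n \,|\, W_n = 0} = \frac{\abs{D_\gamma I_{\cur{W_n > 0}}}\bra{A_n}}{P_\gamma\cur{W_n > 0}} \]
to solve for the total-variation quantity, obtaining
\[ \abs{D_\gamma I_{\cur{W_n > 0}}}\bra{A_n} = \gamma\bra{A_n \,|\, W_n = 0} \cdot P_\gamma\cur{W_n > 0} \period \]
Substituting the value $\gamma\bra{A_n \,|\, W_n = 0} = 1/n$ from Lemma \ref{lemma-random-walk} together with $P_\gamma\cur{W_n > 0} = \bra{2\pi}^{-1/2}$ yields the exact identity $\abs{D_\gamma I_{\cur{W_n > 0}}}\bra{A_n} = \bra{n\sqrt{2\pi}}^{-1}$, which already has the required form $\le C n^{-1}$ with $C = \bra{2\pi}^{-1/2}$.

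Taking $C$ to be the larger of the two constants produced above gives a single absolute constant for which both inequalities hold simultaneously, completing the argument. I do not anticipate a genuine obstacle: the only point needing a little care is converting the asymptotic equivalence of the first estimate into a bound uniform over all $n \ge 1$, and this is dispatched either by the boundedness of the convergent sequence $\sqrt{n}\,\binom{2n}{n} 4^{-n}$ or by the closed-form inequality for the central binomial coefficient.
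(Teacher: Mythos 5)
Your proposal is correct and follows essentially the same route as the paper, which derives the corollary from Lemma \ref{lemma-random-walk} via Stirling's approximation for the central binomial coefficient and the explicit value $P_\gamma\cur{W_n>0} = \bra{2\pi}^{-1/2}$. The only difference is that you spell out the details (in particular how to pass from the asymptotic $\sim \bra{\pi n}^{-1/2}$ to a uniform bound over all $n\ge 1$), which the paper leaves implicit.
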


\begin{proof}(of Lemma \ref{lemma-random-walk}). The first formula follows from a classical result, due to E.~Sparre Andersen (see articles \cite{andersen53}, \cite{spitzer56} and also \cite{spitzer2001principles}, page 218):
\[ \sum_{n = 0}^\infty \gamma\bra{A_n} t^n = \exp\cur{ \sum_{k=1}^\infty \frac{t^k}{k} \gamma\bra{W_k \le 0 } } \displaycomma\]
for every $0 \le t < 1$. Since $\gamma\bra{W_k \le 0 } = 1/2$,
\[ \sum_{n = 0}^\infty \gamma\bra{A_n} t^n = \bra{1-t}^{-1/2} = \sum_{n=0}^\infty \binom{2n}{n} \frac{1}{4^n} t^n\period\]

The second identity follows from a variation of an proof developed in \cite{andersen53scand}. To keep notation simple, let us introduce the probability measure 
\[ \mu = \frac{\abs{D_\gamma I_{\cur{W_n >0}}}}{P_\gamma \cur{W_n >0}}\displaycomma \]
and write
\[ B_m  = \bigcap_{i=0}^{n-1} \cur{W_i \le W_m } \cap \cur{W_n =0 } \displaycomma\]
for $m = 0, \ldots, n$, and $B_n = B_0$. Using the Euclidean theory of $BV$ functions, or adapting the proof of Proposition \ref{prop-conc}, it is not difficult to show that the measure $\mu$ is concentrated on the hyperplane $\cur{W_n = 0}$, so that $\mu\bra{B_0} = \mu\bra{A_n}$ .

We are going to show that $\mu\bra{B_m} = 1/n$ for $m=0,\ldots,n-1$. Let $\theta: \rnum^n \to \rnum^n$ be the cyclical permutation of coordinates
\[ \bra{x_1, \ldots, x_n } = x \mapsto  \theta\bra{x} = \bra{x_2, x_3, \ldots, x_n, x_1 } \displaycomma\]
and notice that both $\gamma$ and $\mu$ are $\theta$-invariant, so that
\[ \mu \bra{ \theta^{-1}\bra{A}} = \mu \bra{A} \]
for any Borel set  $A \subseteq \rnum^n$. Since $W_k\circ \theta = W_{k+1} - X_1$, for $0 \le k < n$, while $W_n\circ \theta = W_n$, it holds for $m = 0,\ldots,n-1$,
\[\theta^{-1}\bra{B_m} = \bigcap_{i=1}^{n-1} \cur{W_i \le W_{m+1} } \cap  \cur{ 0 = W_n \le W_{m+1} } = B_{m+1} \period\]
But $B_{n} = B_0$, so that $\mu\bra{B_0} = \mu\bra{B_m}$ for $m=0,\ldots,n-1$. Moreover,
\[1 =\mu\bra{\cur{W_n=0}}= \mu\bra{\bigcup_{m=0}^{n-1} B_m} = \sum_{m=0}^{n-1} \mu\bra{B_m}\displaycomma\]
where the last identity follows from the fact that, for $0\le m < k \le n-1$,
\[ B_m \cap B_{k} \subseteq \cur{ W_m - W_k = 0, W_n = 0} \displaycomma\]
and the right hand side above is neglected by $\mu$, since it is a linear subspace of codimension $2$ and it can be shown that $\mu$ is absolutely continuous with respect to the $(n-1)$-dimensional Hausdorff measure on the hyperplane $\cur{W_n = 0}$. This last fact follows either from the Euclidean theory of $BV$ functions or after Theorem 1.3 in \cite{Ambrosio2011}.
\end{proof}

We conclude with the proof of Theorem \ref{max1}. For fixed $n\ge1$, write $W_k = \sqrt{n/T} W_{kT/n}$, for $0\le k\le n$, and
\begin{equation}\label{defin} M_n = \sqrt{T/n}\max\cur{W_k:  k=0,\ldots,n} = \sqrt{T/n}\sum_{k=0}^n I_{E_k}W_k \displaycomma\end{equation}
where $E_k = \bigcap_{i=0}^n\cur{W_i\le W_k}$. 
By simple Malliavin calculus, it holds $M_n \in \cD^{1,2}\bra{\Prob}$  with
\[ \nabla M_n  = \sqrt{T/n} \sum_{k=0}^n I_{E_k}I_{[0,kT/n]} \displaycomma\]
which is a finite sum of $BV$ maps, since every $E_k$ is a finite intersection of sets with finite perimeter. Indeed, using this formula it is also easy to show that $\bra{M_n}_{n\ge1}$ converges to $M \in \cD^{1,2}\bra{\prob}$: this is the classical proof that $M$ is Malliavin differentiable. 

It is enough to prove that the sequence of total variations $\abs{D^2M_n}\bra{\Omega}$ is bounded. With a slight variation of Proposition 3.5 in \cite{Ambrosio2010}, the computation can be performed on $\bra{\rnum^n, \gamma}$, where the notation for $\bra{W_k}_{k=0}^n$ can be consistently identified with that of the lemma above. Therefore, $M_n$ is defined by means of \eqref{defin}, but it is a function defined on $\rnum^n$.

Moreover, let $(e_k)_{k=0,\ldots,n}$ be the standard basis in $\rnum^n$ and, for $k=0,\ldots, n$, let $s_k$ be the vector $\sum_{i=0}^k e_i$. The second derivative of  $M_n$ is then given by
\[ D^2M_n = \sqrt{T/n}\sum_{k=0}^n s_k \otimes DI_{E_k} = \sqrt{T/n}\sum_{k, m=0}^n I_{E_k} s_k \otimes DI_{\cur{W_m \le W_k}} \comma\]
where a Leibniz rule has been applied to show that 
\[ DI_{E_k} =\sum_{m=0}^n I_{E_k}  DI_{\cur{W_m \le W_k}} \comma \]
which can be proved by a direct computation in the Euclidean setting.

Since $ DI_{\cur{W_m \le W_k}} = - DI_{\cur{W_k \le W_m}} $, it holds
\[ D^2M_n= \sqrt{T/n}\sum_{0\le m<k\le n} I_{E_k} (s_k-s_m) \otimes DI_{\cur{W_{m} \le W_{k}}}\comma\]
so that it is sufficient for fixed $m<k$, to provide a bound to the quantity
\[ \sqrt{T/n} \abs{I_{E_k}.( s_k-s_m)  \otimes DI_{\cur{W_m \le W_k}}}\bra{\rnum^n} = \sqrt{\bra{m-k}T/n} \abs{DI_{\cur{W_m \le W_k}}} \bra{E_k}\period\]
By Proposition \ref{prop-conc}, the measure above is concentrated on $\cur{W_m = W_k}$ and therefore there is no loss of generality if we substitute $E_k$ with
\[ \bigcap_{i=0}^m \cur{W_{i} -W_{m} \le 0 } \bigcap_{i=m}^{k} \cur{W_{i} -W_{m} \le 0} \bigcap_{i= k}^n \cur{W_{i} -W_{k} \le 0 } \period\]
Moreover, due to the independence of the increments and the explicit characterization of the measure-derivative provided by \ref{eq-bv-eu}, it is not difficult to conclude that the quantity above splits into the product of three terms, where two of them are
\[ \gamma\bra{\bigcap_{i=0}^m \cur{W_{i} -W_{m} \le 0 }}  \gamma\bra{\bigcap_{i= k}^n \cur{W_{i} -W_{k} \le 0}} \comma\]
and the third is
\[ \abs{DI_{\cur{W_{m} \le W_{k}}}}\bra{ \bigcap_{i=m}^{k} \cur{W_{i} -W_{m} \le 0}}\period \]
Using the estimates established in Corollary \ref{coro-random-walk}, summing upon $0\le m < k \le n$,  it holds
\[ \abs{D^2M_n}\bra{\Omega} \le C^3 \sqrt{T} \sum_{ 0< m < k < n} \frac{1}{\sqrt{m\bra{k-m} \bra{n-k}n}} + R\bra{n} \]
where $R\bra{n}$ takes into account the contribution of the terms with $m=0$ or $k=n$. As $n\to \infty$, $R\bra{n}$ is easily seen to be infinitesimal while the sum converges to
\[\int_0^1dt\int_t^1 \frac{ds}{\sqrt{ t\bra{s-t}\bra{1-s} }} < \infty \period\]

This settles the existence of $D^2M$. Formula \eqref{chain-max} is indeed an application of Proposition 8 in \cite{Pratelli2012}, together with the fact that $\mathscr{L}$-a.e.\ $t\in[0,T]$, $D^2_{s,t}M = D_s \partial_t M$. As already remarked $\partial_t M$ is the indicator function of the zero level set for the function $\Delta_t M$, which admits $I_{[\sigma_{[0,t]}, \sigma_{[t,T]}]}$ as Malliavin derivative.

\subsection{Proof of Theorem \ref{max2}}

After Theorem \ref{max1}, $\mathscr{L}$-a.e.\ $t \in [0,T]$, $\partial_tM$ is $BV$: it is then sufficient to prove that, for every $h' \in L^2(0,T)$, the real measure $\mu = \ang{h', D \partial_tM}$ (actually, its total variation measure) is concentrated on the set of paths that attain the global maximum twice. 

As already noticed,  $\partial_tM$ is the indicator function of the set $\cur{ \Delta_t M >0}$, which is $BV$: after Proposition \ref{prop-conc},  $\mu$ is concentrated on the set $\cur{ \Delta_t M =0}$, where $\Delta_t M$ is intended as its natural representative, which is defined everywhere. When $\Delta_t M =0$, i.e.\ $M_{[0,t]} = M_{[t,T]}$, the global maximum is attained twice, with the possible exception of the case $M_{[0,t]} = W_t = M_{[t,T]}$.

Therefore, the theorem will follow if we prove that $\mu$ is always concentrated on the set $A \cup B$, where $A= \cur{M_{[0,t]} > W_t}$ and $B = \cur{M_{[t,T]} > W_t}$. In particular, we are going to prove that, if $h' = 0$ a.e.\ on $[0,t]$, then $\mu$ is concentrated on  $A$ while, if $h' = 0$ a.e.\ on $[t,T]$, it is concentrated on $B$: then, the general case follows simply decomposing $h' = h'I_{[0,t[}+ h'I_{[t,T]}$.

The argument is a slight variation of the proof of Proposition \ref{prop-conc}, and the two cases are treated similarly: for brevity we consider only the case $h' = 0$ a.e.\ on $[0, t]$.

Let $\psi$ be some smooth cylindrical function and let $\phi$ be a smooth function, defined on $\rnum$, with compact support contained in the half line $(-\infty, \epsilon)$, for some $\epsilon >0$, with $0 \le \phi \le 1$. Arguing as in the proof of Proposition \ref{prop-conc}, i.e.\ by fast approximating $M_{[0,t]} - W_t$ in $\cD^{1,2}\bra{\Prob}$ and passing to the limit, the following integration by parts holds:
\[ \int_\Omega \phi\circ\bra{M_{[0,t]} - W_t} \psi d \mu = -\E\sqa{ I_{\cur{\Delta_t M >0}} \bra{\phi\circ\bra{M_{[0,t]} - W_t} } \partial^*_h \psi} \comma\]
since $\partial_{h}\phi\bra{M_{[0,t]} - W_t} = 0$ by the assumption on the support of $h'$ and the fact that $\partial_s\bra{M_{[0,t]} - W_t} = 0$ for $s \in [t,T]$. Being $\phi$ arbitrary, it follows by H\"older inequality that, for some constant $C$ depending  only $\psi$ and $h'$, it holds
\[ \abs{ \bra{\psi \mu } \cur{M_{[0,t]} - W_t<\epsilon} } \le C\bra{\psi, h'} \Prob\bra{M_{[0,t]} - W_t \le \epsilon }^{1/2}\comma\]
so that, as $\epsilon$ goes to zero, we conclude by dominated convergence that
\[ \bra{\psi \mu}\bra{\Omega \setminus A} =  \bra{\psi \mu} \cur{M_{[0,t]} - W_t\le 0}  = 0 \comma \]
which leads to the thesis, being $\psi$ also arbitrary.

\section*{Acknowledgments}
The author thanks L.~Ambrosio and M.~Pratelli for their support during the development of this article.

\bibliography{max-prob}

\begin{thebibliography}{10}

\bibitem{Ambrosio2011}
Luigi Ambrosio and Alessio Figalli.
\newblock {S}urface measures and convergence of the {O}rnstein-{U}hlenbeck
  semigroup in {W}iener spaces.
\newblock {\em {A}nn. {F}ac. {S}ci. {T}oulouse {M}ath. (6)}, 20(2):407--438,
  2011.

\bibitem{Ambrosio2010a}
Luigi Ambrosio, Michele Miranda~Jr., Stefania Maniglia, and Diego Pallara.
\newblock {BV} functions in abstract {W}iener spaces.
\newblock {\em {J}ournal of {F}unctional {A}nalysis}, 258(3):785--813, February
  2010.

\bibitem{Ambrosio2010}
Luigi Ambrosio, Michele Miranda~Jr., Stefania Maniglia, and Diego Pallara.
\newblock {T}owards a theory of functions in abstract {W}iener spaces.
\newblock {\em {P}hysica {D}: {N}onlinear {P}henomena}, 239(15):1458--1469,
  August 2010.

\bibitem{andersen53}
E.~Sparre Andersen.
\newblock {O}n sums of symmetrically dependent random variables.
\newblock {\em {S}kand. {A}ktuarietidsskr}, 26:123--138, 1953.

\bibitem{andersen53scand}
E.~Sparre Andersen.
\newblock {O}n the fluctuations of sums of random variables.
\newblock {\em {M}ath. {S}cand.}, 1:263--285, 1953.

\bibitem{Bogachev2010}
V.I. Bogachev.
\newblock {\em {D}ifferentiable measures and the {M}alliavin calculus}.
\newblock Mathematical surveys and monographs. American Mathematical Society,
  2010.

\bibitem{Feyel1992}
D.~Feyel and A.~de~la Pradelle.
\newblock {H}ausdorff measures on the {W}iener space.
\newblock {\em {P}otential {A}nalysis}, 1:177--189, 1992.
\newblock 10.1007/BF01789239.

\bibitem{Fukushima2001}
Masatoshi Fukushima and Masanori Hino.
\newblock {O}n the {S}pace of {BV} {F}unctions and a {R}elated {S}tochastic
  {C}alculus in {I}nfinite {D}imensions.
\newblock {\em {J}ournal of {F}unctional {A}nalysis}, 183(1):245--268, June
  2001.

\bibitem{funaki07}
Tadahisa Funaki and Kensuke Ishitani.
\newblock Integration by {P}arts {F}ormulae for {W}iener {M}easures on a {P}ath
  {S}pace between two {C}urves.
\newblock {\em Probability Theory and Related Fields}, 137:289--321, 2007.

\bibitem{Gobet2003}
Emmanuel Gobet and Arturo Kohatsu-Higa.
\newblock Computation of {G}reeks for barrier and look-back options using
  {M}alliavin calculus.
\newblock {\em Electron. Comm. Probab.}, 8:51--62 (electronic), 2003.

\bibitem{hariya06}
Yuu Hariya.
\newblock Integration by parts formulae for {W}iener measures restricted to
  subsets in {$\mathbb{R}^d$}.
\newblock {\em Journal of Functional Analysis}, 239(2):594 -- 610, 2006.

\bibitem{Malliavin1997}
P.~Malliavin.
\newblock {\em {S}tochastic analysis}.
\newblock Grundlehren der mathematischen Wissenschaften. Springer, 1997.

\bibitem{Masatoshi2000}
Fukushima Masatoshi.
\newblock {BV} {F}unctions and {D}istorted {O}rnstein {U}hlenbeck {P}rocesses
  over the {A}bstract {W}iener {S}pace.
\newblock {\em {J}ournal of {F}unctional {A}nalysis}, 174(1):227--249, June
  2000.

\bibitem{Nualart1995}
D.~Nualart.
\newblock {\em {T}he {M}alliavin calculus and related topics}.
\newblock Probability and its applications. Springer-Verlag, 1995.

\bibitem{otobe09}
Yoshiki Otobe.
\newblock A type of {G}auss' divergence formula on {W}iener spaces.
\newblock {\em Electron. Commun. Probab.}, 14:no. 44, 457--463, 2009.

\bibitem{Pratelli2012}
M.~Pratelli and D.~Trevisan.
\newblock {F}unctions of bounded variation on the classical {W}iener space and
  an extended {O}cone{--}{K}aratzas formula.
\newblock {\em Stochastic Processes and their Applications}, 122(6):2383 --
  2399, 2012.

\bibitem{spitzer2001principles}
F.~Spitzer.
\newblock {\em {P}rinciples of random walk}.
\newblock Graduate texts in mathematics. Springer, 2001.

\bibitem{spitzer56}
Frank Spitzer.
\newblock {A} {C}ombinatorial {L}emma and {I}ts {A}pplication to {P}robability
  {T}heory.
\newblock {\em {T}ransactions of {T}he {A}merican {M}athematical {S}ociety},
  82:323--323, 1956.

\bibitem{zambotti02}
Lorenzo Zambotti.
\newblock Integration by parts formulae on convex sets of paths and
  applications to {SPDEs} with reflection.
\newblock {\em Probability Theory and Related Fields}, 123:579--600, 2002.

\end{thebibliography}

\end{document}